\title{The Expected Number of Distinct Consecutive Patterns in a Random Permutation}
\author{Austin Allen\\
Carnegie Mellon University\and Dylan Cruz Fonseca\\
University of Puerto Rico, R\'io Piedras\and
Veronica Dobbs\\ Marquette University\and 
Egypt Downs\\
Virginia State University\and
Evelyn Fokuoh and Anant Godbole\\ East Tennessee State University\and Sebasti\'an Papanikolaou Costa\\ Universidad Ana G. M\'endez\and Christopher Soto\\ Queens College, CUNY\and Lino Yoshikawa\\ University of Hawaii at Hilo}
\newtheorem{thm}{Theorem}[section]
\newtheorem{lem}[thm]{Lemma}
\newtheorem{dfn}{Definition}[section]
\def\p{\mathbb P}
\def\e{\mathbb E}
\def\lr{\left(}
\def\rr{\right)}
\def\lc{\left\{}
\def\rc{\right\}}
\newcommand{\be}{\begin{equation}}
\newcommand{\ee}{\end{equation}}
\newcommand{\beq}{\begin{eqnarray}}
\newcommand{\eeq}{\end{eqnarray}}
\date{}
\begin{document}
\maketitle
\begin{abstract}
Let $\pi_n$ be a uniformly chosen random permutation on $[n]$.  Using an analysis of the probability that two overlapping consecutive $k$-permutations are order isomorphic, we show that the expected number of distinct consecutive patterns in $\pi_n$ is $\frac{n^2}{2}(1-o(1))$.  This exhibits the fact that random permutations pack consecutive patterns near-perfectly.
\end{abstract}
\section{Introduction}

Let $\pi_n$ be a permutation on $[n]$.  We say that $\pi_n$ {\it contains} a permutation $\mu_k$ of length $k$ if there are $k$ indices $n_1<n_2<\ldots <n_k$ such that $(\pi_{n_1}, \pi_{n_2},\ldots,\pi_{n_k})$ are in the same relative order as $(\mu_1,\mu_2,\ldots, \mu_k)$. We say that $\pi_n$ {\it consecutively contains} the permutation $\mu_k$  if there are $k$ consecutive indices $(m,m+1,\ldots,m+k-1)$ such that $(\pi_{m}, \pi_{m+1},\ldots,\pi_{m+k-1})$ are in the same relative order as $(\mu_1,\mu_2,\ldots, \mu_k)$. Let $\phi(\pi_n)$ be the number of distinct consecutive patterns of all lengths $k; 1\le k\le n$, contained in $\pi_n$.  We focus on the case where $\pi_n$ is a {\it uniformly chosen random permutation} on $[n]$, denote the random value of $\phi(\pi_n)$ by $X$, and study, in this paper, its expected value $\e(X)$.  
\subsection{Distinct Subsequences and Non-Consecutive Patterns}
First we summarize results on the extremal values of $\psi(\pi_n)$, where $\psi(\pi_n)$ is the number of distinct ({\it and not necessarily consecutive}) patterns contained in $\pi_n$.  The identity permutation reveals that
\[\min_{\pi_n\in S_n}\psi(\pi_n)=\min_{\pi_n\in S_n}\phi(\pi_n)=n+1,\]
since the embedded patterns are $\emptyset, 1, 12,\ldots,(12\ldots n)$.  On the other hand, motivated by a question posed by Herb Wilf at the inaugural Permutation Patterns meeting, held in Dunedin in 2003 (PP2003), several authors have studied the maximum value of $\psi(\pi_n)$.  First we have the trivial pigeonhole bound
\begin{equation}\max_{\pi_n\in S_n}\psi(\pi_n)\le\sum_{k=1}^n\min\lr{n\choose k}, k!\rr\sim 2^n,\end{equation}
which was mirrored soon after PP2003 by Coleman \cite{c}:
\begin{equation}\max_{\pi_n\in S_n}\psi(\pi_n)\ge 2^{n-2{\sqrt n}+1}\enspace(n=2^k);\end{equation}
which led to 
\[\lr\max_{\pi_n\in S_n}\psi(\pi_n)\rr^{1/n}\to2.\]  A team of researchers began to see if this (surprising) bound could be improved.  This led to the result in \cite{a} that
\begin{equation}\max_{\pi_n\in S_n}\psi(\pi_n)\ge2^n\lr1-6{\sqrt{n}}2^{-{\sqrt n}/2}\rr,\end{equation}
and thus to the conclusion that $\max_{\pi_n\in S_n}\psi(\pi_n)\sim 2^n$.  Alison Miller improved both the upper and lower bounds (2) and (3), showing in \cite{m} that
\begin{equation}2^n-O(n^22^{n-{\sqrt{2n}}})\le\max_{\pi_n\in S_n}\psi(\pi_n)\le 2^n-\Theta(n2^{n-{\sqrt{2n}}}).\end{equation} 
By extracting the constants in (4) and conducting an asymptotic analysis, Fokuoh showed in \cite{f} that the trivial upper bound actually performs better that the one in (4) for small and not-too-small values of $n$, though, of course (4) does better asymptotically.  

In \cite{ba} the authors studied the expected number $\e(\xi(W))$ of distinct subsequences contained in the word $W=W_n$ obtained when $n$ letters $s_1,\ldots,s_n$ are independently generated from a $d$-letter alphabet -- with the $i$th letter being ``typed" with probability $\alpha_i$ (they also covered the two-state Markov case).  In the simplest case, when $d=2$, it was shown in \cite{ba} (with $\alpha_1:=\alpha$) that
asymptotically 
$$\e(\xi(W)) \sim k \big(1+\sqrt{\alpha(1-\alpha)}\big)^n,$$
which contains the earlier result from \cite{fl} that in the equiprobable case, $\e(\xi(W))\sim k(\frac{3}{2})^n$ for a constant $k$.

The fact that $\e(\xi(W))\sim A^n$ for $A<2$ might suggest that the same is true for $\e(\psi(\pi_n))$.  But consider the following argument.  Since
\[k!\gg {n\choose k}\] for large $k$, it would seem reasonable, via a heuristic ``balls in boxes" argument that most or all of the patterns of large size contained in $\pi_n$ would be distinct.  It was accordingly conjectured in \cite{f} that
\begin{equation}\e(\psi(\pi_n))\sim 2^n.\end{equation}

While we are unable to prove that (5) holds, we show in this paper that the following is true for the number $X$ of distinct consecutive permutations contained in a random permutation:

\bigskip

\centerline{\bf Main Theorem}
\[\e(X)=\max_{\pi_n\in S_n}(\phi(\pi_n))(1-o(1))=\frac{n^2}{2}(1-o(1)).\]

\section{Proof of Main Theorem}  
\begin{lem}For any $\pi_n\in S_n$, 
\[\phi(\pi_n)\le \sum_{k=1}^n\min\{(n-k+1), k!\}\le \frac{n^2}{2}(1+o(1)).\]
\end{lem}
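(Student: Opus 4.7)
The first inequality is a straightforward pigeonhole argument. For each length $k \in \{1, \ldots, n\}$, the permutation $\pi_n$ contains exactly $n-k+1$ consecutive windows of length $k$, so the number of distinct consecutive patterns of length $k$ is at most $n-k+1$. At the same time, there are only $k!$ permutations in $S_k$, so this count is also at most $k!$. Summing the minimum over $k$ yields $\phi(\pi_n) \le \sum_{k=1}^n \min\{n-k+1, k!\}$.

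For the second inequality, the plan is to split the sum at the crossover point. Let $k^*$ be the largest integer for which $k! \le n-k+1$. Elementary estimates show that $k^* = \Theta(\log n / \log \log n)$; in particular $k^* = o(\log n)$ is more than enough. For $k \le k^*$ we bound the summand by $k!$, and for $k > k^*$ we bound it by $n-k+1$.

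The head contribution $\sum_{k=1}^{k^*} k!$ is dominated by its last term and is therefore $O(k^* \cdot k^*!) = O(n \log n / \log \log n) = o(n^2)$. The tail contribution is
\[
\sum_{k=k^*+1}^{n} (n-k+1) \;=\; \sum_{j=1}^{n-k^*} j \;=\; \frac{(n-k^*)(n-k^*+1)}{2} \;=\; \frac{n^2}{2}(1+o(1)),
\]
since $k^* = o(n)$. Adding the two pieces gives $\sum_{k=1}^n \min\{n-k+1, k!\} = \frac{n^2}{2}(1+o(1))$, completing the lemma.

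There is no real obstacle here: the argument is a pigeonhole bound followed by a routine split-the-sum estimate. The only thing one has to be slightly careful with is verifying that the crossover $k^*$ is small enough (specifically $k^* = o(n)$ and $k^* \cdot k^*! = o(n^2)$) so that both the head and the correction to the tail are negligible compared to $n^2/2$.
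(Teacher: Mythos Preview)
Your proof is correct. The first inequality is handled identically to the paper. For the second inequality, however, the paper takes a much shorter route: it simply uses $\min\{n-k+1,k!\}\le n-k+1$ for \emph{every} $k$, so that
\[
\sum_{k=1}^n \min\{n-k+1,k!\}\le \sum_{k=1}^n (n-k+1)=\sum_{k=1}^n k=\frac{n(n+1)}{2}=\frac{n^2}{2}(1+o(1)),
\]
with no need to locate the crossover $k^*$ or estimate the head $\sum_{k\le k^*} k!$ separately. Your split-the-sum argument is not wrong, just unnecessary for an upper bound; in fact what you have written essentially proves the stronger statement $\sum_{k=1}^n \min\{n-k+1,k!\}=\frac{n^2}{2}(1+o(1))$, i.e.\ you have folded the paper's Lemma~2.2 (the matching lower bound) into this one. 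That is a perfectly reasonable way to organize things, but if the goal is only the inequality stated in this lemma, the one-line bound above suffices.
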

\begin{proof}  There are $k!$ permutation patterns of length $k$.  However, not all of these can be present unless the number of consecutive positions of length $k$, namely $(n-k+1)$, provide ``enough room" for this to occur, i.e., if $(n-k+1)\ge k!$  This proves the first inequality.
Next note that
\begin{equation}
\phi(\pi_n)\le\sum_{k=1}^n\min\{(n-k+1), k!\}\le\sum_{k=1}^n (n-k+1)=\sum_{k=1}^nk\le\frac{n^2}{2}(1+o(1)).
\end{equation}
This completes the proof.\end{proof}
\begin{lem}
\[\sum_{k=1}^n\min\{(n-k+1), k!\}\ge \frac{n^2}{2}(1-o(1)).\]
\end{lem}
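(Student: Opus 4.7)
The plan is to split the sum at the threshold where $k!$ overtakes $n-k+1$, discard the small-$k$ contribution, and evaluate the tail exactly. Concretely, let $k^\ast = k^\ast(n)$ be the largest integer with $k^\ast! \le n - k^\ast + 1$. Since $k!$ grows super-polynomially in $k$ while $n - k + 1 \le n$, we have the rough estimate $k^\ast \sim \log n / \log\log n$ by Stirling; in particular, and this is all we will really need, $k^\ast = o(\sqrt{n})$.

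With $k^\ast$ in hand, the plan has three steps. First, I would observe that for every $k > k^\ast$ we have $k! > n - k + 1$, so $\min\{n-k+1, k!\} = n - k + 1$ throughout that range. Second, I would discard the terms $k \le k^\ast$ (they are nonnegative) to obtain the lower bound
\[
\sum_{k=1}^n \min\{(n-k+1),k!\} \;\ge\; \sum_{k=k^\ast+1}^n (n-k+1) \;=\; \sum_{j=1}^{\,n-k^\ast} j \;=\; \binom{n-k^\ast+1}{2}.
\]
Third, I would expand the binomial coefficient as $\tfrac{1}{2}(n-k^\ast)(n-k^\ast+1) = \tfrac{n^2}{2}\bigl(1 - 2k^\ast/n + O(k^\ast{}^2/n^2) + O(1/n)\bigr)$ and use $k^\ast = o(n)$ to conclude that the right-hand side is $\tfrac{n^2}{2}(1-o(1))$.

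There is no genuine obstacle here; the only thing to be careful about is that the threshold $k^\ast$ is indeed $o(n)$, which follows at once from the estimate $k^\ast = O(\log n/\log\log n)$ noted above. Combining with Lemma 2.1 then yields the two-sided asymptotic $\sum_{k=1}^n \min\{n-k+1,k!\} = \tfrac{n^2}{2}(1 + o(1))$, which will serve as the deterministic benchmark that $\mathbb{E}(X)$ is shown to match in the Main Theorem.
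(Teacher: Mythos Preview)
Your proof is correct and follows essentially the same approach as the paper: locate the crossover $k^\ast$ (the paper calls it $a_n$) via Stirling so that $k^\ast \sim \log n/\log\log n$, drop the terms with $k\le k^\ast$, and sum the remaining triangular tail $\sum_{k>k^\ast}(n-k+1)=\tfrac12(n-k^\ast)(n-k^\ast+1)=\tfrac{n^2}{2}(1-o(1))$. Your version is slightly tidier in that you define $k^\ast$ as an explicit integer threshold (and implicitly use the monotonicity of $k!$ and $n-k+1$ to conclude $\min=n-k+1$ for all $k>k^\ast$), whereas the paper works directly with the asymptotic solution $a_n$, but the argument is the same.
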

\begin{proof}
The equation $(n-k+1)=k!$, by Stirling's approximation, holds if
\[{\sqrt{2\pi k}}\lr\frac{k}{e}\rr^k(1+o(1))+k=n+1, \]
which is true if and only if
\[\lr\exp\{k\log k-k+(1/2)(\log k+\log(2\pi))+o(1)\}\rr(1+o(1))=\exp\{\log n(1+o(1))\}.\]
A good approximation to the above is the solution to $k\log k=\log n$, namely
\[k=\frac{\log n}{\log\log n}(1+o(1)):=a_n,\]
($a_n$ will be critical in what follows), which yields
\[\sum_{k=1}^n\min\{(n-k+1), k!\}\ge \sum_{k=a_n}^n(n-k+1)\sim\frac{(n-a_n)^2}{2}=\frac{n^2}{2}(1-o(1)),\]
as asserted.
\end{proof}
We mention that the evidence in support of the Main Theorem  is strong, as evidenced by the following data for small $n$ (the evidence in support of (5) is not as strong; see \cite{f})
\begin{center}
\begin{tabular}{ |c|c|c|c| }
 \hline
 $n$ & $\sum_{k=1}^{n}$min$(n-k+1,k!)$ & Bound attained (Y/N)& $\e(X)$\\ 
 \hline
 3&4&Yes&3.67\\
 \hline
 4&6&Yes&5.83\\
 \hline
 5&9&Yes&8.7\\
 \hline
 6&13&Yes&12.33\\
 \hline
 7&18&Yes&16.78\\
 \hline
 8&24&Yes&22.08\\
 \hline
\end{tabular}
\end{center}
For example, the permutation 14325 contains the 9=$\sum_{k=1}^5\min\{k!, 6-k\}$ patterns 1, 12, 21, 132, 321, 213, 1432, 3214, and 14325. 

A study of patterns that occur in consecutive positions in a permutation is not new.  For example, the so called vincular patterns partially follow this scheme.  Far more relevant to this paper, however, is the work of \cite{b} and \cite {d} on non-consecutive permutations that touches on some of the aspects of this paper.
\subsection{Auxiliary Random Variables}
We will attack our problem using several auxiliary random variables to analyze the behavior of $X$.  First we define 
$X_k$ to be the number of distinct consecutive permutations of length $k$ contained in a random $n$-permutation.  We note that for some $k_0\ge a_n$ to be chosen later,
\begin{equation}
\e(X)=\sum_{k=1}^n\e(X_k)\ge \sum_{k=k_0}^n\e(X_k)=\sum_{k=k_0}^n\lr(n-k+1)-\e(Y_k)\rr,
\end{equation}
where $Y_k$ denotes the number of $k$ permutations that are consecutively contained in the random $n$-permutation as ``repeats".  

The variables $Y_k$ are difficult to work with directly, so we turn to yet another variable
\begin{equation}Z_k=\sum_{j=1}^{n-k+1}\sum_{l=0}^{k-1}I_{j,l,k},\end{equation}
where for $l\ge 1$, the indication variable $I_{j,l,k}$ equals one iff $(\pi_j,\ldots,\pi_{j+k-1})$ and  
$(\pi_{j+k-l}\ldots,\pi_{j+2k-l-1})$ are order isomorphic ($I_{j,l,k}$ equals zero otherwise.)  For $l=0$, $I_{j,l,k}=1$ if two non-overlapping sets of consecutive indices have the same pattern.  See Figure 1.  In other words $I_{j,l,k}$ equals one iff two consecutive sets of $k$ positions along the random $n$-permutation, overlapping in $l$ positions, are order isomorphic.  For two permutations $\eta_1$ and $\eta_2$ that are order isomorphic, we write $\eta_1\simeq \eta_2$.  Next, we set
\begin{equation}
Z=\sum_{k=k_0}^nZ_k=\sum_{k=k_0}^n\sum_{j=1}^{n-k+1}\sum_{l=0}^{k-1}I_{j,l,k},
\end{equation}
and make the crucial observation that 
\begin{equation}
\{Y\ge 1\}\subseteq\{Z\ge 1\},
\end{equation}
where 
\begin{equation}
Y=\sum_{k=k_0}^nY_k,
\end{equation}
so that 
\begin{equation}\p(Y\ge1)\le\p(Z\ge 1)\le \e(Z).
\end{equation}
Next notice that (by (12))
\begin{eqnarray}
\e(Y)&=&\sum_{j=1}^{n^2/2}j\p(Y=j)\nonumber\\&\le&\frac{n^2}{2}\sum_{j\ge 1}\p(Y=j)\nonumber\\
&=&\frac{n^2}{2}\p(Y\ge 1)\nonumber\\&\le&\frac{n^2}{2}\e(Z).
\end{eqnarray}
\begin{dfn}
A $k$-permutation is said to be $l$-good if it is possible for $I_{j,l,k}$ to equal one, i.e., if $\p(I_{j,l,k}=1)>0$.
\end{dfn}
The proof of the next lemma is obvious.
\begin{lem} For $l=0$ any $k$-permutation is 0-good, and $\p(I_{j,0,k}=1)=\frac{1}{k!}$.
\end{lem}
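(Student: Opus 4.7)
The plan is to verify both assertions of the lemma by exploiting the fact that when $l=0$ the two blocks of $k$ consecutive positions are disjoint, so they involve $2k$ distinct positions and, in turn, $2k$ distinct values from $[n]$. This absence of shared positions is exactly what makes the statement routine.

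First I would handle $0$-goodness by an explicit construction. Fix any target pattern $\mu \in S_k$; I want to exhibit $\pi_n \in S_n$ for which both $(\pi_j,\ldots,\pi_{j+k-1})$ and $(\pi_{j+k},\ldots,\pi_{j+2k-1})$ are order isomorphic to $\mu$. Choose any $2k$ distinct values from $[n]$ and split them arbitrarily into sets $A,B$ of size $k$. Place the elements of $A$ in the first block in the unique ordering that realizes $\mu$, and do the same for $B$ in the second block; fill the remaining $n-2k$ positions with the remaining values in any way. This is a valid $\pi_n \in S_n$, and since $\pi_n$ is uniform it occurs with probability $1/n! > 0$, so indeed $\p(I_{j,0,k}=1)>0$. (The construction does not depend on $\mu$, so every $k$-permutation is $0$-good.)

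For the probability computation I would condition on the set $V$ of $2k$ values appearing in positions $j,\ldots,j+k-1,j+k,\ldots,j+2k-1$. By exchangeability of the uniform random permutation, given $V$ all $(2k)!$ orderings of these values across the $2k$ positions are equally likely. Further condition on the partition $V=A\sqcup B$ into the $k$ values falling in the first block and the $k$ values falling in the second; then the two blocks are independent uniform random arrangements of $A$ and of $B$, respectively. In particular, the pattern of the second block is a uniformly chosen element of $S_k$, independent of the first block's pattern, so the conditional probability that the two patterns agree is exactly $1/k!$. Averaging over the conditioning yields $\p(I_{j,0,k}=1)=1/k!$, as claimed.

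The main obstacle: there really isn't one, which matches the author's "obvious" designation; the content of the proof is just the elementary fact that the pattern of a uniformly random ordering of $k$ distinct real numbers is uniform over $S_k$, applied independently to the two disjoint blocks. Any subtlety in the rest of the paper will come from the overlapping cases $l\ge 1$, where the two blocks share positions and the independence argument above breaks down.
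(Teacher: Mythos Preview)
Your proposal is correct and simply fleshes out what the paper declares obvious without proof: disjointness of the two blocks when $l=0$ gives independent uniform patterns, yielding both $0$-goodness for every $\mu\in S_k$ and $\p(I_{j,0,k}=1)=1/k!$. There is nothing to contrast, since the paper offers no argument beyond the word ``obvious.''
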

\begin{lem} For $l=k-1$, the only good permutations are the monotone ones.
\end{lem}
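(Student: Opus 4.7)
The plan is to argue by inspecting the ascent/descent structure of the two overlapping windows. Suppose $\sigma = (\sigma_1, \ldots, \sigma_k)$ is a $k$-permutation that is $(k-1)$-good; then there is a realization of $\pi_n$ of positive probability for which both $(\pi_j, \ldots, \pi_{j+k-1})$ and $(\pi_{j+1}, \ldots, \pi_{j+k})$ have pattern $\sigma$. Setting $a_i := \pi_{j+i-1}$ for $i = 1, \ldots, k+1$, these are $k+1$ distinct values and the two length-$k$ windows $(a_1, \ldots, a_k)$ and $(a_2, \ldots, a_{k+1})$ are both order isomorphic to $\sigma$.

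The main step is then a one-line observation. For each $i \in \{1, \ldots, k-1\}$, the first window having pattern $\sigma$ gives $a_i < a_{i+1}$ iff $\sigma_i < \sigma_{i+1}$, while the second window having the same pattern gives $a_{i+1} < a_{i+2}$ iff $\sigma_i < \sigma_{i+1}$. Therefore $\mathrm{sgn}(a_{i+1} - a_i) = \mathrm{sgn}(a_{i+2} - a_{i+1})$ for every such $i$, and by propagation all $k$ consecutive signs coincide. Hence $(a_1, \ldots, a_{k+1})$ is strictly monotone, forcing $\sigma$ to be either the identity pattern $12\cdots k$ or its reverse $k(k-1)\cdots 1$.

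For the converse, I would simply exhibit $\pi_n$ equal to the identity (respectively reverse) permutation on $[n]$, for which every $k$-window is monotone and coincides with its right-shift as a pattern, so that both monotone patterns are genuinely $(k-1)$-good. The only care needed anywhere is in aligning pattern indices with shifted window indices; I do not anticipate any real obstacle, and the argument is entirely elementary.
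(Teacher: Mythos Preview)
Your proof is correct and uses essentially the same idea as the paper's: both arguments compare the ascent/descent at corresponding positions of the two overlapping windows. The paper phrases it contrapositively (a non-monotone pattern must contain a peak or valley, which yields an immediate contradiction between the two windows), while you phrase it directly as a sign-propagation argument; these are two presentations of the same observation, with yours arguably a bit cleaner in that it avoids the ``WLOG peak'' case split.
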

\begin{proof}If, without loss of generality, $\mu=(12\ldots k)$, then it is clear that $\p(I_{j,k-1,k}=1)=\frac{1}{(k+1)!}$.  On the other hand if $\mu\ne(12\ldots k)$; $\mu\ne(k(k-1)\ldots1)$, then without loss of generality, there exist indices $i, i+1, i+2$ such that $\pi(i+1)>\max\{\pi(i), \pi(i+2)\}$.  But then, for the permutation shifted to the right by 1, $\pi(i)>\pi(i+1)$, so that we cannot have $I_{j,k-1,k}=1$.
\end{proof}
\begin{lem}
For $1\le l\le \frac{k}{2}$, \begin{equation}\p(I_{j,l,k}=1)\le\frac{3^k}{k!}.\end{equation}
\end{lem}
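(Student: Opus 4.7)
The plan is to bound $\mathbb{P}(I_{j,l,k}=1)$ by conditioning on enough structure to express the event $\{W_1\simeq W_2\}$, where $W_1=(\pi_j,\ldots,\pi_{j+k-1})$ and $W_2=(\pi_{j+k-l},\ldots,\pi_{j+2k-l-1})$, as a coincidence between two independent random variables each supported on a small set. Specifically, I would condition on (i) the overlap sequence $V=(\pi_{j+k-l},\ldots,\pi_{j+k-1})$ and (ii) the unordered set $L$ of values placed at the left-only positions $j,\ldots,j+k-l-1$; the right-only set $R$ is then forced as the complement of $V\cup L$ inside the set of values appearing in $W_1\cup W_2$. Conditional on $(V,L)$, the internal arrangements of $L$ within its $k-l$ positions and of $R$ within its $k-l$ positions are independent uniform random permutations.

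The key structural observation is that under this conditioning the pattern of $W_1$ is uniformly distributed on a set $S_1\subseteq S_k$ of size exactly $(k-l)!$: every $\sigma\in S_1$ has its last $l$ entries pinned down (they realize the fixed overlap sub-pattern $\tau$ using the specific $l$-subset of ranks in $\{1,\ldots,k\}$ forced by $L$), while the $(k-l)!$ arrangements of the left-only values at the remaining $k-l$ positions produce $(k-l)!$ distinct $k$-patterns, each attained with equal probability. An analogous statement holds for $W_2$ with a set $S_2\subseteq S_k$ of size $(k-l)!$ whose first $l$ entries are pinned down by $V$ and $R$. By independence of the two arrangements,
\[
\mathbb{P}\bigl(W_1\simeq W_2\,\big|\,V,L\bigr) \;=\; \frac{|S_1\cap S_2|}{((k-l)!)^2}.
\]

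Any $\sigma\in S_1\cap S_2$ must have \emph{both} its first $l$ and its last $l$ entries determined by the conditioning; the hypothesis $l\le k/2$ guarantees that these two blocks of positions are disjoint, so only the $k-2l$ middle positions remain free, giving $|S_1\cap S_2|\le (k-2l)!$ (with equality precisely when the two rank sets are disjoint, and $|S_1\cap S_2|=0$ otherwise). Since the resulting bound is independent of $(V,L)$, it passes to the unconditional probability:
\[
\mathbb{P}(I_{j,l,k}=1) \;\le\; \frac{(k-2l)!}{((k-l)!)^2} \;=\; \frac{1}{k!}\cdot\frac{\binom{k}{l}}{\binom{k-l}{l}}.
\]
Since $\binom{k}{l}\le 2^k$ (as a single summand in the binomial expansion of $2^k$) and $\binom{k-l}{l}\ge 1$ (using $l\le k-l$), the right-hand side is at most $2^k/k!\le 3^k/k!$.

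The main obstacle I anticipate is justifying the uniform-on-$(k-l)!$-patterns claim: one needs to verify that distinct arrangements of the $k-l$ left-only values at the $k-l$ left-only positions yield distinct $k$-patterns of $W_1$, which follows because the value at each position determines its rank in $W_1$ (and hence the corresponding pattern entry). Everything else---the independence of the $W_1$- and $W_2$-patterns, the intersection count $(k-2l)!$, and the final binomial inequality---is then essentially routine.
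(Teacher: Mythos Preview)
Your argument is correct and in fact yields the sharper bound $2^k/k!$, but the route is genuinely different from the paper's. The paper partitions the $2k-l$ positions into five consecutive blocks $\rho_1,\omega_1,\rho_2,\omega_2,\rho_3$ of lengths $l,\,k-2l,\,l,\,k-2l,\,l$ (the middle $l$-block being the overlap), notes that $\eta_1\simeq\eta_2$ forces the necessary conditions $\rho_1\simeq\rho_2\simeq\rho_3$ and $\omega_1\simeq\omega_2$, and uses independence of the patterns of disjoint blocks to obtain $\mathbb P(I_{j,l,k}=1)\le \frac{1}{(l!)^2(k-2l)!}$; the multinomial inequality $\binom{k}{l,\,l,\,k-2l}\le 3^k$ then gives the stated bound. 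Your conditioning approach instead makes the full $k$-patterns of the two windows independent and counts their possible coincidences directly, producing $\frac{(k-2l)!}{((k-l)!)^2}$. Since
\[
\frac{1}{(l!)^2(k-2l)!}\Big/\frac{(k-2l)!}{((k-l)!)^2}=\binom{k-l}{l}^{2}\ge 1,
\]
your bound is always at least as good as the paper's, at the cost of a slightly more delicate conditioning argument; the paper's block method is cruder but quicker to state.

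One small point of phrasing: when you say ``$R$ is forced as the complement of $V\cup L$ inside the set of values appearing in $W_1\cup W_2$,'' that ambient value set is itself random, so you are implicitly conditioning on it as well (equivalently, conditioning on the triple $(V,L,R)$). Since your eventual bound $(k-2l)!/((k-l)!)^2$ is uniform over all such conditioning, this is harmless, but it is worth making explicit.
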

\begin{proof}
Denote the two permutations in question, overlapping in $l$ positions, by $\eta_1$ and $\eta_2$.   We must have the following situation shown in Figure 1:
\begin{figure}
    \centering
    
	    {\def\XUNIT{.75} \def\YUNIT{.75}
	 \begin{tikzpicture}[scale=\YUNIT]
    	\node at (1,1)(1) {X};
    	\node at (2,1)(2) {X};
    	\node at (3,1)(3) {X};
    	\node at (4,1)(4) {X};
    	\node at (5,1)(5) {X};
    	\node at (6,1)(6) {X};
    	\node at (7,1)(7) {X};
    	\node at (8,1)(8) {X};
    	\node at (9,1)(9) {X};
    	\node at (10,1)(10) {X};
    	\node at (8,0)(11) {O};
    	\node at (9,0)(12) {O};
        \node at (10,0)(13) {O};
        \node at (11,0)(14) {O};
        \node at (12,0)(15) {O};
        \node at (13,0)(16) {O};
        \node at (14,0)(17) {O};
        \node at (15,0)(18) {O};
        \node at (16,0)(19) {O};
        \node at (17,0)(20) {O};
       
        \draw[decorate, decoration={brace,raise=2pt,amplitude=10pt, mirror}](1.south west)--(3.south east);
        \draw[decorate, decoration={brace,raise=2pt,amplitude=10pt, mirror}](4.south west)--(7.south east);
        \draw[decorate, decoration={brace,raise=2pt,amplitude=10pt, mirror}](11.south west)--(13.south east);
        \draw[decorate, decoration={brace,raise=2pt,amplitude=10pt, mirror}](14.south west)--(17.south east);
        \draw[decorate, decoration={brace,raise=2pt,amplitude=10pt, mirror}](18.south west)--(20.south east);
        
        \node at (2,2) {$\leftarrow l \rightarrow $};
        \node at (5.5,2) {$\leftarrow k-2l \rightarrow $};
        
        \node at (2,-0.5) {$\rho_1$};
        \node at (5.5,-0.5) {$\omega_1$};
        \node at (9,-1.5) {$\rho_2$};
        \node at (12.5,-1.5) {$\omega_2$};
        \node at (16,-1.5) {$\rho_3$};
	\end{tikzpicture}    
	}
    \caption{Consecutive Overlapping Permutations, $l\leq \frac{k}{2}$}
    \label{fig:my_label}
\end{figure}
\[\rho_1\simeq\rho_2\simeq\rho_3,\]
and
\[w_1\simeq w_2\]
in order for $\eta_1$ and $\eta_2$ to be {\it consistent with being order isomorphic}.  Thus
\begin{eqnarray}\p(\eta_1\simeq\eta_2)&\le&\p(\eta_1,\eta_2\ {\rm are\ consistent})\nonumber\\
&\le&\p(\rho_1\simeq\rho_2\simeq\rho_3, w_1\simeq w_2)\nonumber\\
&\le&\frac{1}{l!^2}\frac{1}{(k-2l)!}\nonumber\\
&\le&\frac{3^k}{k!},\end{eqnarray}
where the last line of (15) follows from the fact that
\[\frac{k!}{(l!^2)(k-2l)!}\le\frac{k!}{(k/3)!^3}\sim K\cdot \frac{3^k}{k}\]
for some constant $K$, by Stirling's approximation.  Note that the bound in (15) is uniform, i.e., independent of the value of $l\le k/2$, but perhaps more importantly, it is obtained without fussing about which values of integers might actually occupy the $2k-l$ positions of $\eta_1\cup\eta_2$.  An analysis that analyzes these values gets very complicated very rapidly and so we will settle for the upper bound in (15).  The same kind of analysis, in which we use consistency with order isomorphism as a driving method, is used for $l>\frac{k}{2}$, which we turn to next.
\end{proof}
\begin{lem} For $k-2\ge l>\frac{k}{2}$, 
\begin{equation}\p(I_{j,l,k}=1)\le\lr\frac{1}{(k-l)!}\rr^{\frac{k}{k-l}-1}.\end{equation}
\end{lem}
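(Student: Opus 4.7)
My plan is to iterate the ``consistency with order isomorphism'' strategy of the preceding lemma: the hypothesis $l > k/2$ means the shift $d := k - l$ is smaller than half the window, so a single short pattern of length $d$ must repeat itself many times along the overlap. Set $q := \lfloor k/d \rfloor \ge 2$ (valid since $d < k/2$) and partition the $k+d$ absolute positions spanned by $\eta_1 \cup \eta_2$ into consecutive blocks $B_1, B_2, \ldots, B_{q+1}$ of length $d$, followed by a possible remainder block of length $r := k \bmod d < d$.

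The structural step is to note that the defining condition of $\eta_1 \simeq \eta_2$, namely $\pi_i < \pi_j \Leftrightarrow \pi_{i+d} < \pi_{j+d}$ for all $1 \le i < j \le k$, when specialized to $i,j$ both in a single block $B_s \subseteq W_1$, forces $B_s$ and its shift $B_{s+1} = B_s + d$ to share the same internal relative order. Iterating for $s = 1, 2, \ldots, q$ (legitimate since $sd \le qd \le k$) yields a chain $B_1 \simeq B_2 \simeq \cdots \simeq B_{q+1}$ in which a single element of $S_d$ must serve as the common internal order of all $q+1$ blocks.

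The probability step is then clean. Since $\{I_{j,l,k}=1\}$ depends only on relative orders among the $k+d$ positions in question, I may model those positions as carrying a uniform random permutation of $[k+d]$. Conditioning on the (unordered) set of $d$ values assigned to each block, the internal orderings of distinct blocks become independent and uniform on $S_d$, so the probability that all $q+1$ blocks realize a single common order is $d! \cdot (d!)^{-(q+1)} = (d!)^{-q}$. Because $q \ge k/d - 1 = k/(k-l) - 1$, this already exceeds the claimed bound, with slack enough to absorb the remainder block and any noninteger portion of the exponent.

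I expect the main obstacle to be purely bookkeeping: carefully checking that every $B_s, B_{s+1}$ sits inside the intended window so the chain of $q$ equalities is legitimate, and confirming that the weakening from $(d!)^{-q}$ to $(d!)^{-(q-1)}$ truly swallows the boundary effects when $d \nmid k$. No individual step looks delicate; conceptually the argument is the previous lemma's one-step consistency bound iterated $q-1$ times, and the extra factor $(1/(k-l)!)^{1}$ built into the looser stated exponent is precisely what allows one to avoid a separate analysis of the remainder.
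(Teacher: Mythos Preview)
Your argument is correct and is essentially the paper's own proof: both observe that the shift $d=k-l$ forces the pattern of the first $d$ positions to repeat in blocks $B_1\simeq B_2\simeq\cdots\simeq B_{q+1}$ with $q=\lfloor k/d\rfloor$, yielding the bound $(1/d!)^{q}\le(1/d!)^{k/d-1}$. Your conditioning-on-value-sets step makes the independence explicit where the paper simply asserts ``each of which has probability $1/(k-l)!$'', and your worry in the last paragraph about needing to drop to $(d!)^{-(q-1)}$ is unnecessary, since $(q+1)d\le k+d$ guarantees all $q+1$ full blocks fit and $q=\lfloor k/d\rfloor>k/d-1$ already suffices.
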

\begin{proof}  We first illustrate the idea of the proof for $k=8; l=6$; see Figure 2.  If the first two elements of the permutation $\eta_1$ form the pattern $AB$, then so must the first two elements of $\eta_2$, which are also the third and fourth elements of $\eta_1$ -- forcing the third and fourth elements of $\eta_2$ to form an $AB$ pattern too.  This repetition of the $AB$ pattern persists till we reach the end of $\eta_2$, for a total of four induced $AB$ patterns caused by the first two elements of $\eta_1$.  Thus 
\[\p(I_{j,k,l}=1)\le\lr\frac{1}{2!}\rr^4.\]
\begin{figure}
    \centering
    
	    {\def\XUNIT{.75} \def\YUNIT{.75}
	 \begin{tikzpicture}[scale=\YUNIT]

	\node at (0,1)(0) {$\eta_1$:};
    	\node at (1,1)(1) {A};
    	\node at (2,1)(2) {B};
    	\node at (3,1)(3) {A};
    	\node at (4,1)(4) {B};
    	\node at (5,1)(5) {A};
    	\node at (6,1)(6) {B};
    	\node at (7,1)(7) {A};
    	\node at (8,1)(8) {B};
	\node at (2,0)(9) {$\eta_1$:};
    	\node at (3,0)(10) {A};
    	\node at (4,0)(11) {B};
        \node at (5,0)(12) {A};
        \node at (6,0)(13) {B};
        \node at (7,0)(14) {A};
        \node at (8,0)(15) {B};
        \node at (9,0)(16) {A};
        \node at (10,0)(17) {B};

	\end{tikzpicture}    
	}
    \caption{Consecutive Overlapping Permutations, $l\geq \frac{k}{2}$}
    \label{fig:my_label}
\end{figure}
In general the pattern in the first $k-l$ positions of $\eta_1$ is repeated $\lfloor\frac{k}{k-l}\rfloor\ge{\frac{k}{k-l}-1}$ times, each of which has a probability $\frac{1}{(k-l)!}$.  This completes the proof.
\end{proof}
\subsection{Putting it all Together}  For $k\ge k_0$ ($k_0 $ is still to be specified), we seek to find $\sum_{l=0}^{k-1}\p(I_{j,l,k}=1)$.  We address the case of $l=0$,  $l=k-1$ first.  By Lemmas 2.3 and 2.4, we have
\begin{equation}
\p(I_{j,0,k}=1)\le\frac{1}{k!},
\end{equation}
and 
\be
\p(I_{j,k-1,k}=1)\le\frac{2}{(k+1)!}.
\ee
Now, since there are $\le n$ consecutive positions disjoint from the $j$th set, we see that
\be
\sum_{k=k_0}^n\sum_j\p(I_{j,0,k}=1)\le
\frac{n^3}{k!},
\ee
and
\be
\sum_{k=k_0}^n\sum_j\p(I_{j,k-1,k}=1)\le
\frac{2n^2}{(k+1)!}.
\ee
For the case of small overlaps, Lemma 2.5 gives
\be
\sum_{k=k_0}^n\sum_j\sum_{l=1}^{k/2}\p(I_{j,k,l}=1)\le 
\frac{n^2k3^k}{k!}.
\ee
Finally, for the large overlap case, we have
\beq
&&\sum_{k=k_0}^n\sum_j\sum_{l=(k/2)}^{k-2}\p(I_{j,k,l}=1)\nonumber\\&\le& n^2\sum_l \lr\frac{1}{(k-l)!}\rr^{\frac{k}{k-l}-1}=n^2\sum_l \lr\frac{1}{(k-l)!}\rr^{\frac{l}{k-l}}\nonumber\\
&=&n^2\lc\lr\frac{1}{2!}\rr^{(k-2)/2}+\lr\frac{1}{3!}\rr^{(k-3)/3}+\ldots+\lr\frac{1}{(k/2)!}\rr
\rc.
\eeq
In (22), the $l=k-2$ term is $2(1/2)^{k/2}$, which we treat separately.  For the other terms we use the inequality $r!\ge{\sqrt{2\pi r}}(r/e)^r$ to provide the estimate
\be\lr\frac{1}{(k-l)!}\rr^{\frac{l}{k-l}}\le\lc\lr\frac{1}{(k-l)!}\rr^{\frac{1}{k-l}}\rc^{k/2}\le\lr\frac{e(1+o(1))}{k-l}\rr^{k/2}\le(0.96)^k.\ee
Notice that we could have been much more precise in dealing with the large overlap case.  Recognizing that (23) yields the largest upper bound of the terms in equations (19) to (23), we conclude from (9) that
\be
\e(Z)\le 6n^3(0.96)^k,
\ee
which via (13) gives
\be
\e(Y)\le 3n^5(0.96)^k,
\ee
so that by (7) we have that
\be
\e(X)\ge \lr\sum_{k=k_0}^n (n-k+1)\rr-3n^5(0.96)^k.
\ee
We are finally ready to choose $k_0$, and do this in a way so that, e.g.,
\be
3n^5(0.96)^k\le n^6(0.96)^k\le \frac{1}{n^2}.
\ee
Elementary algebra shows that (27) surely holds whenever
\be
k\ge 200\ln n:=k_0,
\ee
so that \begin{eqnarray*}\e(X)&\ge& \lr\sum_{k=200\ln n}^n(n-k+1)\rr-\frac{1}{n^2}=\frac{(n-200\ln n)^2}{2}(1-o(1))-\frac{1}{n^2}\\&=&\frac{n^2}{2}(1-o(1)),\end{eqnarray*}
proving the main theorem.
\section{The Polynomial Method of AA, VD, SP, CS, and LY}  Five of the authors of this paper, all REU students in the summer of 2020, are working on the final details of an alternative proof of the Main Theorem.  Their paper will appear elsewhere, but the key idea is to enumerate the number of good permutations on $[k]$, which are polynomial of degree $k-l$.

\section{Acknowledgments}  This research was supported by NSF Grant 1852171.
	
\end{document}